\theoremstyle{plain}
\newtheorem*{conj*}{Conjecture}
\newtheorem*{cor*}{Corollary}
\newtheorem{theorem}{Theorem}[section]
\newtheorem{proposition}[theorem]{Proposition}
\newtheorem{corollary}[theorem]{Corollary}
\newtheorem{lemma}[theorem]{Lemma}
\newtheorem{claim}{Claim}
\theoremstyle{definition}
\newtheorem*{def*}{Definition}
\newtheorem{remark}[theorem]{Remark}
\newtheorem{rmk}[theorem]{Remark}
\newtheorem{example}[theorem]{Example}
\newtheorem{definition}[theorem]{Definition}
\newcommand{\T}{\mathbb{T}}
\newcommand{\be} {\beta}        
\newcommand{\ga} {\gamma}    
\newcommand{\de} {\delta}
\renewcommand{\epsilon}{\varepsilon}
\newcommand{\Z}{\mathbb{Z}}
\newcommand{\N}{\mathbb{N}}
\newcommand{\R}{\mathbb{R}}
\newcommand{\eps}{\varepsilon}
\newcommand{\per}{\operatorname{Per}}
\newcommand{\sing}{\operatorname{Sing}}
\title{Rescaled-Expansive Flows: Unstable Sets and Topological Entropy}
\author{Alexander Arbieto}
\address{Instituto de Matem\'atica, Universidade Federal do Rio de Janeiro, P. O. Box 68530, 21945-970 Rio de Janeiro, Brazil.}
\email{arbieto@im.ufrj.br}
\author{Alfonso Artigue}
\address{Departamento de Matematica y Estad\'istica del Litoral, Universidad de la Rep\'ublica, Gral. Rivera 1350, Salto, Uruguay}
\email{artigue@unorte.edu.uy}
\author{Elias Rego}
\address{Department of Mathematics, Southern University of Science and Technology, Shenzhen, China.}
\email{rego@sustech.edu.cn}
\thanks{Primary MSC code: 37B05, secundary MSC code:37C70. Keywords: rescaled-expansiveness, topological entropy, attractors. 
A. Arbieto was partially supported by CNPq, FAPERJ and
PRONEX/DS from Brazil, E. Rego  was partially supported by  NSFC 12250710130 from China.
}
\begin{document}

\maketitle

\begin{abstract}
In this work, we introduce and explore a rescaled theory of local stable and unstable sets for rescaled-expansive flows and its applications to topological entropy. We introduce a rescaled version of the local unstable sets and the unstable points. We find conditions for points of the phase space to exhibit non-trivial connected pieces of such unstable sets. We apply these results to the problem of proving positive topological entropy for rescaled-expansive flows with non-singular Lyapunov stable sets.

\end{abstract}

\section{introduction}

The property of expansiveness introduced by R. Utz in 1950  is a landmark of the dynamical systems theory. Its great success is in part due to its proximity to the hyperbolic theory and its close relationship with many important topics of the dynamical systems theory, such as the stability theory and the entropy theory. Very soon, expansiveness was perceived as a source of complex dynamical behavior. Indeed, many expansive systems exhibit chaotic features. We refer the reader to  \cite{AH} for a detailed exposition of the dynamical properties of expansive homeomorphisms. 

The concept of expansive flow  was introduced  in \cite{BW} by R. Bowen and P. Walters
to describe the behavior of axiom A flows, but it is not appropriate to deal with flows exhibiting singularities accumulated by regular orbits, such as the Lorenz attractor.
For these flows,
M. Komuro introduced in \cite{K1} the concept of $k^*$-expansiveness. Later, other versions of expansiveness were introduced \cite{Ar1,Ar2,Ar3,WW}.

Several authors have been interested in the chaotic behavior of expansive flows and topological entropy is among the most important
ways of measuring the complexity of a dynamical system. Indeed, positive entropy is an indicative of chaotic behavior.  In many contexts, expansiveness is related to positive topological entropy. For instance,  in \cite{Fa} A. Fathi showed that any expansive homeomorphism has positive topological entropy if the phase space has  positive topological dimension.
H. Kato generalized this in \cite{Ka} for continuum-wise expansiveness (by applying techniques developed by
R. Ma\~{n}\'{e} in \cite{Ma}) and for expansive flows in \cite{ACP} by A. Arbieto, W. Cordeiro and M.J. Pac\'{i}fico.
These results are of topological character as they do not assume any differentiable structure on the system's phase space.

To our best knowledge, there is no result of topological nature regarding the positiveness of the
entropy of a $k^*$-expansive flow.
In this work, we aim to explore the implications of  expansiveness to the topological entropy of expansive singular flows.
The main reasons for this lack of results are the following:

\begin{enumerate}    
    \item The existence of many distinct versions of expansiveness for singular flows.
    \item These results are strongly dependent on the uniform expansiveness property, but expansive singular flows may not satisfy this property. 
    \item The nonexistence of cross-sections for singularities and the loss of control over the size and the time of the cross-sections at regular points.
    
\end{enumerate}
Actually,
the techniques of \cite{Ka,ACP}
to prove positive entropy
are strongly supported by the existence of non-trivial connected local stable or unstable sets.
Unfortunately, the above-listed facts may forbid the existence of such local stable sets.
Some examples will be considered in Section \ref{examples}.

This article deals with the rescaled-expansiveness property (R-expansiveness for short) introduced by L. Wen and X. Wen in \cite{WW}. Our goal is to study the topological entropy of R-expansive flows. To achieve it, we introduce a rescaled version of local stable and unstable sets for singular flows based on the dynamics of the holonomy maps along orbits of regular points. We obtain conditions to R-expansive flows that admit non-trivial pieces of R-unstable sets with "hyperbolic behavior" and study their influence in the topological entropy of R-expansive flows.  This choice of expansiveness is made due to its closeness with $k^*$-expansiveness, but also due to its suitability to work with flow boxes, providing us a nice control over the holonomy maps between cross-sections which is essential to the study of stable and unstable sets. 

This text is organized as follows: In section \ref{prel}, we establish the basic notation and the primary setting used through this work. In section \ref{sectionexp}, we study R-expansiveness in more detail and explore its role in the existence of local stable/unstable sets. Section \ref{entropysection} is devoted to studying the entropy of R-expansive flows, providing the reader with some new examples of R-expansive flows and explaining how they are related to our results.

\section{Preliminaries}\label{prel}
This section is devoted to establishing the basic setting we will work on.
Throughout this paper, $M$ denotes a compact and boundary-less smooth Riemannian manifold.  Let us denote $g$ for the Riemannian metric of  $M$.  In addition, we denote $d$  and $\|\cdot \|$ for the distance induced on $M$ and the norm induced on $TM$ by the Riemannian metric  $g$, respectively.

\begin{definition}
 A $C^r$-\textit{flow} $\phi$ on $M$ is a $C^r$-map $\phi:\R\times M \to M$ satisfying the following conditions:
\begin{enumerate}
    \item $\phi(0,x)=x$, for every $x\in M$.
    \item $\phi(t+s,x)=\phi(t,\phi(s,x)))$, for every $t,s\in \R$ and every $x\in M$. 
\end{enumerate}
\end{definition}

Throughout this work, we will always assume $r\geq 1$. In this case, $\phi$ generates a velocity vector field that will be denoted by $X$. Let us denote by $\phi_t$ the map $\phi(t,\cdot):M\to M$, when $t$ is fixed. The \textit{orbit} and the \textit{positive orbit}  of a point  $x$ are, respectively, the sets $$O(x)=\{\phi_t(x); t\in \R\} \textrm{ and } O_+(x)=\{\phi_t(x); t\geq 0\}.  $$
We say that $x\in M$ is a \textit{singularity}
if $\phi_t(x)=x$ for all $t\in\R$.
A point $x\in M$ is \textit{periodic} if it is not a singularity and there exists $t>0$ such that $\phi_t(x)=x$. The sets of singularities and periodic points are denoted by $\sing(\phi)$ and $\per(\phi)$, respectively.  We say that a set $\Lambda$ is \textit{invariant} is $\phi_t(\Lambda)=\Lambda$, for every $t\in \R$.

\begin{definition}
Let $\Lambda$ be a compact and invariant set.
We say that $\Lambda$ is \textit{Lyapunov stable} if for any $\eps>0$, there is some $\de>0$ such that if $x\in B_{\de}(\Lambda)$, then $\phi_t(x)\in B_{\eps}(\Lambda)$, for every $t\geq 0$.
We say that $\Lambda$ is an \textit{attractor} if:
\begin{enumerate}
\item $\phi|_{\Lambda}$ is \textit{transitive}, \textit{i.e.}, there is $x\in \Lambda$ such that $\overline{O_+(x)}=\Lambda$.
\item There is an open neighborhood  $U$ of $\Lambda$ satisfying:
    \begin{enumerate}
    \item $\overline{\phi_t(U)}\subset U$ for any $t>0$ and
    \item  $\Lambda=\cap_{t\geq0}\phi_t(U)$.
    \end{enumerate}
\end{enumerate}
\end{definition}

Every attractor is Lyapunov stable, but the converse does not hold.
A neighborhood of $\Lambda$ as in the above definition is called \textit{isolating neighborhood}.
We say that an attractor $\Lambda$ is \textit{non-periodic} if it is not a periodic orbit. Let $\Lambda\subset M$ be compact and invariant. 

Now, we recall the definition of topological entropy for flows. Fix $\eps>0$ and $t>0$.
We say that a pair of points is \textit{$t$-$\eps$-separated by $\phi$} if there is some $0\leq s\leq t$ such that $d(\phi_s(x),\phi_s(y))>\eps$.
Also, a subset $E\subset M$ is $t$-$\eps$-\textit{separated}
if any pair of distinct points of $E$
is $t$-$\eps$-separated by $\phi$.
For $\Lambda\subset M$,
let $s_t(\eps,\Lambda)$ denote the maximal cardinality of a $t$-$\eps$-separated subset of $\Lambda$. This number is finite due to the compactness of $M$. We define the \textit{topological entropy} of $\phi$ on $\Lambda$ to be the number $h(\phi,\Lambda)$ defined by
 $$ h(\phi,\Lambda)=\lim_{\eps\to 0}\limsup_{t\to \infty}\frac{1}{t}\log s_t(\eps,\Lambda).$$

\begin{definition}
The \textit{topological entropy} of $\phi$ is  the number $h(\phi)=h(\phi,M)$.
\end{definition}

The problem of finding positive topological entropy for expansive systems was first considered in the 80's and 90's by Fathi, Kato, and Lewowicz (see) \cites{Fa,Ka,Her}.
Its version for expansive flows is
proved by A. Arbieto, W. Cordeiro, and M. J. Pac\'{i}fico in \cite{ACP}.

\subsection{Expansiveness}

We start by giving the definition of expansiveness, which R. Bowen and P. Walters introduced in \cite{BW}.

\begin{definition}
A flow  $\phi$ is \textit{expansive} if for every $\eps>0$, there is $\de>0$ such that the following holds:
If $x,y\in M$,  $\rho\colon\R\to \R$ is a continuous function satisfying $\rho(0)=0$ and  $$d(\phi_t(x),\phi_{\rho(t)}(y))\leq \de$$ for every $t\in \R$, then $y\in\phi|_{[-\eps,\eps]}(x)$. We say that a compact and invariant set $\Lambda\subset M$ is expansive if the flow restricted to $\Lambda$ is expansive.
\end{definition}

\begin{theorem}[\cite{ACP}]\label{EntCW}
Let $\phi$ be a continuous flow and suppose $\dim(M)>1$. If $\phi$ is expansive then $h(\phi)>0$.
\end{theorem}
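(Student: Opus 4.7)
The plan is to adapt the classical Fathi--Kato strategy for CW-expansive homeomorphisms to the flow setting by reducing the flow problem to a problem for a Poincar\'e return map, and then invoking the homeomorphism version of the Kato entropy theorem. The hypothesis $\dim(M)>1$ is essential precisely because one needs Poincar\'e cross-sections of positive dimension in order to have non-degenerate continua to work with.

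First I would locate a regular point $x_0\in M\setminus\sing(\phi)$ and construct a small transverse cross-section $\Sigma$ through $x_0$ with $\dim\Sigma=\dim M-1\geq 1$. Restricting to a compact $\phi$-invariant piece on which the return time to $\Sigma$ is uniformly bounded (for instance, the intersection of $\Sigma$ with a compact non-wandering, non-singular subset of $M$) one obtains a well-defined continuous Poincar\'e first return map $P\colon \Sigma_0\to \Sigma_0$, where $\Sigma_0\subset\Sigma$ is compact and $P$-invariant.

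Next I would transfer the CW-expansiveness of $\phi$ to the map $P$. The idea is that the saturation of any non-degenerate continuum $C\subset\Sigma_0$ by a small flow segment is a continuum in $M$ of positive diameter, so CW-expansiveness of $\phi$ forces two of its points, and hence two points of $C$, to be separated by the flow under every reparametrization; a careful argument converts this into the statement that no non-degenerate continuum in $\Sigma_0$ can have all its $P$-iterates of uniformly small diameter, that is, $P$ is CW-expansive as a homeomorphism of $\Sigma_0$. Kato's homeomorphism theorem then yields $h(P)>0$.

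Finally, I would convert $h(P)>0$ into $h(\phi)>0$ via Abramov's formula applied to the suspension-like structure induced by $P$ on $\Sigma_0$, or concretely by spreading an $(n,\vep)$-separated set for $P$ into a $(T,\vep')$-separated set for the flow, with $T$ controlled in terms of $n$ by the uniform return-time bound. The main obstacle is the transfer step: without additional hyperbolic structure, the return time function on $\Sigma$ may be unbounded and even discontinuous near $\sing(\phi)$, so one must choose $\Sigma_0$ carefully so that $\Sigma_0$ is compact and $P$-invariant, the return times are uniformly bounded on it, and it still contains enough continua for the Kato-type argument to bite; selecting $\Sigma_0$ inside a non-wandering, non-singular piece of the dynamics with controlled diameter is where the real technical work lies.
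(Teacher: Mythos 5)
First, a point of order: the paper does not prove Theorem~\ref{EntCW} at all --- it is imported verbatim from \cite{ACP} and used as a black box in Section~5. So there is no in-paper proof to compare against, and your proposal has to be judged as a reconstruction of the argument of \cite{ACP}. That argument does not pass through a single global Poincar\'e return map: it works with the flow directly, first establishing (via a hyperspace/compactness argument of the same kind as the proof of Theorem~\ref{ConStab} in this paper) that points admit non-degenerate local stable or unstable continua inside local cross-sections, and then running a Kato-style subdivision-and-counting argument to produce exponentially many $(T,\eps)$-separated points for the flow itself. The reduction you propose is a genuinely different route, and it is where your proof breaks.

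The gap is the very first step. For a general non-singular flow there is no reason that a single local cross-section $\Sigma$ through a regular point $x_0$ admits a compact, $P$-invariant subset $\Sigma_0$ on which the first return map is globally defined with uniformly bounded return times \emph{and} which contains non-degenerate continua. The set of points of $\Sigma$ that return to $\Sigma$ need not be closed, the return time is in general unbounded (a global cross-section exists only for suspension flows, which is a nontrivial topological restriction on $\phi$), and any compact invariant $\Sigma_0$ you do manage to extract --- say the $\omega$-limit trace of a single recurrent orbit --- may well be zero-dimensional, in which case it contains no non-degenerate continuum and Kato's theorem gives you nothing. You acknowledge that choosing $\Sigma_0$ ``is where the real technical work lies,'' but that work is precisely the content of the theorem: without it the argument never produces the continuum on which CW-expansiveness acts. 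The transfer step (CW-expansiveness of $\phi$ implies CW-expansiveness of $P$ on $\Sigma_0$) and the final Abramov/separated-set step are fine in spirit, but they are downstream of an object whose existence you have not established. To repair this along the lines of \cite{ACP} you would replace the single section by a finite family of local cross-sections covering a compact non-singular invariant set, prove a uniform-expansiveness statement for the holonomies between them (the analogue of Theorem~\ref{uef} here), deduce the existence of non-degenerate stable or unstable continua in some section, and then count separated points for the flow directly rather than for a return map.
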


\begin{rmk}The previous result was proved in the context of $CW$-expansive flows, which, in turn, contains the expansive flows. In this work, we will not detail $CW$-expansive flows since this concept will not be addressed here. 

\end{rmk}

The property of expansiveness was designed to be the model for the expansive behavior displayed by axiom A and Anosov flows.
Unfortunately, this concept does not capture the singular behavior of flows such as the Lorenz Attractor.  To cover these flows, M. Komuro introduced in \cite{K1} the following concept:

\begin{definition}
A flow $\phi$ is $k^*$-\textit{expansive} if for every $\eps>0$, there is some $\de>0$ such that the following holds:
if $x,y\in M$,  $\rho:\R\to \R$  is an  increasing homemorphism  and  $$d(\phi_t(x),\phi_{\rho(t)}(y))\leq \de$$ for every $t\in \R$, then there are  $t_0,s\in \R$ such that $|s|\leq \eps$ and $\phi_{\rho(t_0)}(y)=\phi_{t_0+s}(x)$. We say that a compact and invariant set $\Lambda\subset M$ is $k^*$-expansive if the flow restricted to $\Lambda$ is $k^*$-expansive.
\end{definition}

\begin{rmk}
 Expansive flows do not exist in surfaces (see \cite{BW} for details), while in \cite{Ar3} surface, $k^*$-expansive flows are classified. On the other hand, it was proved in \cite{Y} that every surface flow has zero topological entropy. Thus, there is no result similar to Theorem \ref{EntCW} for $k^*$-expansiveness for $\dim(M)=2$.
\end{rmk}

\section{Rescaled expansiveness}\label{sectionexp} In this section, we give the definition of the main concept used in this work: The rescaled-expansiveness and study its influence on the existence of non-trivial local R-stable and R-unstable sets. 

\subsection{Cross sections and flow boxes}
The techniques used in \cite{ACP} to obtain positive entropy for expansive flows are strongly supported by the fact that if $\phi$ is non-singular, then one can always choose cross-sections for every point with uniform size (see \cite{BW}). This allows us to use flow boxes with uniform size to study the dynamics. Unfortunately, it does not hold for singular flows. To see this, let $x\in M$ be a regular point for $\phi$.  The normal space of $x$ in $T_xM$ is the set  $$\mathcal{N}(x)=\{v\in T_xM; v \perp X(x)\}.$$
Let us denote $\mathcal{N}_{r}(x)=\mathcal{N}(x)\cap \mathcal{B}_{r}(0)$, where $\mathcal{B}_{r}(0)$ is the ball in $T_xM$ of radius $r$ and centered at $0$. The tubular flow theorem for smooth flows asserts that for any regular point $x$, there are $\eta_x>0$ and $r_x>0$ such that the set $$N_{r_x}(x)=\exp_x(\mathcal{N}_{r_x}(x)) $$ is a cross-section of time $\eta_x$ through $x$, \textit{i.e.}, for any $y\in N_{r_x}(x)$ we have that $$\phi_{[-\eta_x,\eta_x]}(y)\cap N_{r_x}(x)=\{y\}.$$
Furthermore, any $y\in N_{r_x}(x)$ is regular. 
Another important consequence of the tubular flow theorem is that it allows us to work with \textit{holonomy maps} generated by flows. To make the previous assertion precise, let $x\in M$ be a regular point, fix some $t\in\R$ and suppose that $N_{\eps}(\phi_t(x))$ is a cross-section. Then, by the tubular flow theorem, there are $r_x>0$, and
a continuous function $\tau\colon N_{r_x}(x)\to\R$ such that
$\phi_{\tau(y)}(y)\in N_{r_x}(\phi_t(x))$ for all $y\in N_{r_x}(x)$ and $\tau(x)=t$.
In this way, we define the holonomy map $$P_{x,t}:N_{r_x}(x)\to N_{\epsilon}(\phi_{t}(x))$$ by setting $P_{x,t}(y)=\phi_{\tau(y)}(y)$.

One of the main difficulties in the use of cross-sections and holonomy maps for singular flows is that the radius $r_x$ may go to zero when $x$ approaches some singularity. The next result allows us to have a better control on these cross-sections. Indeed, it gives us an explicit relation between the constants $\eps$ and $r_x$ used above.
Before stating the result, let us fix the following notation that will be used throughout this
paper $$N^r_{\eps}(x)=N_{\eps\|X(x)\|}(x).$$

\begin{theorem}[\cite{WW}]\label{RFB}
Suppose that $X$ is a $C^1$-vector field and let $\phi$ be the flow induced by $X$. Then there exist $L>0$ and $\be_0>0$ such that for any $0<\be<\be_0$, $t>0$ and $x\in M \setminus \sing(\phi)$ we have:
 \begin{enumerate}
\item The set $\phi|_{[-\be,\be]}(N^r_{\be}(x))$ is a flow box; in particular, it does not contain singularities.
\item The ball $B_{\frac{1}{3}\be||X(x)||}(x)$ is contained on  $\phi|_{[-\be,\be]}(N^r_{\be}(x))$

\item The holonomy map   $$P_{x,t}:N_{\frac{\be}{L^{t}}}^r(x) \to N^r_{\be}(\phi_t(x))$$ is well defined and injective. Moreover, for any $y\in N_{\frac{\be}{L^{t}}}^r(x)$ we have $$d(\phi_s(x),\phi_s(y))\leq \be|| X(\phi_s(x))||$$ for any $0\leq s\leq t$. The same statement is valid for $t<0$.  

\end{enumerate}

\end{theorem}

\subsection{R-expansiveness}
Based on the above ideas, L. Wen and X. Wen in \cite{WW} introduced a \textit{rescaled} version of expansiveness
by considering that

the distance of separation of the orbits is rescaled by the size of the velocity vector field.

\begin{definition}
A $C^r$-flow $\phi$ on $M$ is \textit{R-expansive} (or \textit{rescaled expansive}) if for every $\eps>0$ there is some $\de>0$ such that:
if $x,y\in M$, $\rho:\R\to \R$  is a increasing continuous function and $$d(\phi_t(x),\phi_{\rho(t)}(y))\leq \de||X(\phi_t(x))  ||$$ for every $t\in \R$, then $\phi_{\rho(t)}(y)\in \phi|_{[t-\eps,t+\eps]}(x)$ for any $t\in \R$. We say that a compact and invariant set $\Lambda\subset M$ is R-expansive if the flow restricted to $\Lambda$ is R-expansive.
\end{definition}

In \cite{Ar2} it was proven that $k^*$-expansiveness implies R-expansiveness under the assumption of hyperbolicity of $Sing(\phi)$. Later, this result was improved into the following:

\begin{theorem}[\cite{RWY}]\label{artiguerelation}
  If $\phi$ is $k^*$-expansive,  then $\phi$ is R-expansive.
\end{theorem}

\subsection{Stable and Unstable sets}
 Here we give a new definition of local stable and unstable sets for regular points of a singular flow, based on holonomy maps and rescaled distances. Fix some regular point $x\in M$.  By Theorem \ref{RFB}  for any $0<\be\leq \be_0$  the set  $N_{\be}^r(x)$ is a cross section of radius $\be||X(x)||$ for the flow.  Moreover, for any $t>0$ the holonomy map $P_{x,t}$ is well defined on $N^r_{\frac{\be}{L^t}}(x)$ and if $y\in N^r_{\frac{\be}{L^t}}(x)$,  the orbit segment  between $y$ and $P_{x,t}(y)$ belongs to the $\be$-rescaled  tubular neighborhood of   $O(x)$.   Let us fix $x\in M\setminus \sing(\phi)$, $t>0$  and $\be>0$.

\begin{definition}
The $R$-\textit{stable} and $R$-\textit{unstable local sets} of $x$ with size $\beta$ and time $t$ are, respectively
$$S_{\be}(t,x)=\left\{y\in N^r_{\frac{\be}{L^t}}\left(x\right); d(P_{x,nt}(x),P_{x,nt}(y))\leq \frac{\be}{L^t}||X(P_{x,nt}(x)) ||, \forall n\in \N\right\}$$ 
$$U_{\be}(t,x)=\left\{y\in  N^r_{\frac{\be}{L^t}}\left(x\right); d(P_{x,-nt}(x),P_{x,-nt}(y))\leq \frac{\be}{L^t}||X(P_{x,-nt}(x)) ||, \forall n\in \N\right\}$$ 
\end{definition}

Let us present a useful characterization of R-expansiveness in terms of R-stable and R-unstable sets.

\begin{proposition}\label{charac}
The flow $\phi$ is R-expansive if, and only if,  there exists $\de>0$ such that for any regular point $x\in M$  and any $t>0$, one has $S_{\de}(t,x)\cap U_{\de}(t,x)=\{x\}$.
\end{proposition}

\begin{proof}
Let $x$ be a regular point, fix $\be>0$ small enough and let $0<\eps<\be$. Let $0<\de'<\eps$ be given by the R-expansiveness of $\phi$ related to $\eps$.
Fix $\delta=\frac{\delta'}{3}$ and suppose  $$y\in S_{\de}(t,x)\cap U_{\de}(t,x).$$ 
Next, we shall construct a reparametrization $\rho$ satisfying: 
 $$d(\phi_s(x),\phi_{\rho(s)}(y))\leq \de||(X(\phi_t(x))||,$$
for every $s\in \mathbb{Z}.$
Since $y\in S_{\de}(t,x)$, we have $$ d(P_{x,nt}(x),P_{x,nt}(y))<\frac{\de}{L^t}||X(P_{x,nt}(x))||$$ for every $n\geq 0$. 
  Thus,  from to Theorem \ref{RFB},  we obtain the following facts:
\begin{enumerate}  
\item For every $s\geq 0$,  $N^r_{\delta}(\phi_{s}(x))$ is a cross section through $\phi_s(x)$. 
\item For every $n\geq 0$ and every $s \in[0,1]$, it holds  $$d(\phi_s(P_{x,nt})(x)),\phi_s(P_{x,nt}(y)))\leq\delta||X(\phi_s(P_{x,nt}(x))) ||.$$ 
\item $B^r_{\delta}(P_{x,nt}(x))$ is contained in the flowbox of $N^r_{\delta'}(P_{x,nt}(x))$.  

\end{enumerate}

Define $\rho_+:[0,+\infty)\to [0,-\infty)$ by parts as follows:\\  If $n\geq 0$ and $s\in [nt,(n+1)t)$, then $\rho_+(s)$  is the unique time such that:
\begin{enumerate}   
 \item $\phi_{\rho_+(s)}(y)$ belongs the orbit segment from $P_{x,nt}(y)$ to $P_{x,(n+)t}(y)$.  
\item $\phi_{\rho_+(s)}(y)\in N^r_{\delta}(\phi_s(x))$.
\end{enumerate}

 \begin{figure}[h]
     \centering
   \includegraphics[scale=0.4]{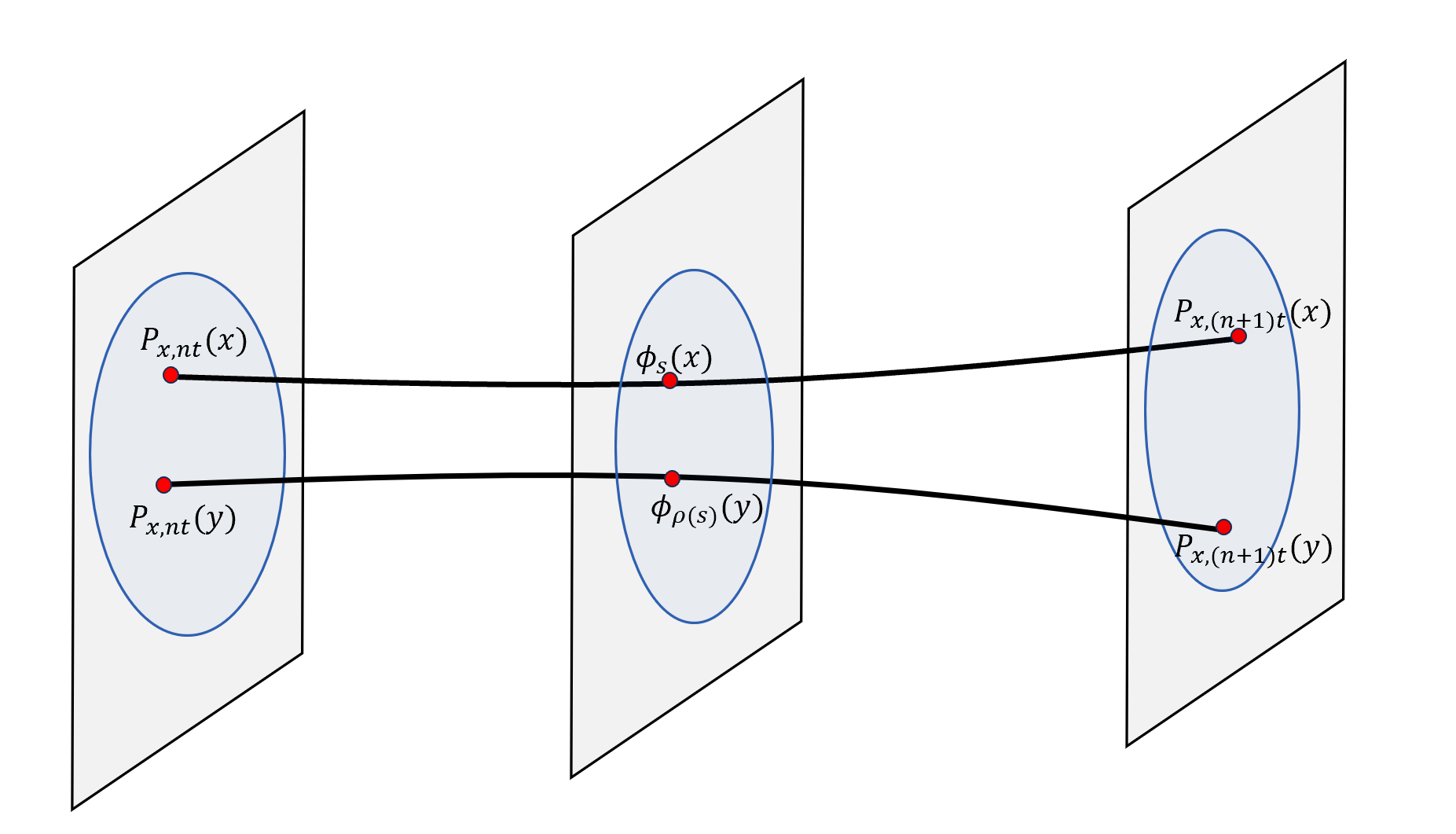}
   \caption{The construction of $\rho$.}
    \label{fig:my_label}
 \end{figure}

It is easy to see that $\rho_+$ is a homeomorphism and satisfies
$$d(\phi_s(x),\phi_{\rho(s)}(y))\leq\delta||X(\phi_s)(x) ||, $$
for every $s\geq0$.  

 By an analogous reasoning, since $x\in \mathcal{U}_{\delta}(t,x)$ we can construct a homeomorphism $$\rho_-:(-\infty,0]\to (-\infty,0]$$ such that $$d(\phi_s(x),\phi_{\rho(s)}(y))\leq\delta||X(\phi_s)(x) ||, $$
for every $s\leq0$.  Finally, we define:
$$\rho(s)=\begin{cases} \rho_+(s), \textrm{ if } s\geq 0\\
\rho_-(s), \textrm { if } s\leq0.
\end{cases}
  $$
 Now R-expansiveness implies that $y\in \phi_{[-\eps,\eps]}(x)$, but since $x,y\in N^{r}_{\frac{\de}{L^t}}(x)$,  we have that $y=x$.

Conversely, denote  $$B=\sup\{||X(x)||;x\in M\},$$  fix $0<\eps<\be$ and let $0<\de<\eps$ be such that $$S_{\frac{\de}{B}}(t,x)\cap U_{\frac{\de}{B}}(t,x)=\{x\}$$ for any regular point $x$ and any $t>0$. Fix $t>0$ such that $L^t>1$ and suppose there exist a reparametrization $\rho$ and two points $x,y$ satisfying $$ d(\phi_s(x),\phi_{\rho(s)}(y))\leq \frac{\de}{BL^t}||X(\phi_s)(x)) ||$$ for $ s\in \R$. This implies in particular that $$d(x,y)<\frac{\de}{BL^t}||X(x)||.$$ Since $\de<\eps<\be$, Theorem \ref{RFB} implies that there exists some $$|s_0|<\frac{\de}{BL^t}||X(x)||\leq\de\leq\eps$$ such that $y_0=\phi_{s_0}(y)\in N^r_{\de}(x)$. More generally, since $$d(\phi_{nt}(x),\phi_{h(nt)}(y))<\frac{\de}{BL^t}||X(\phi_{nt}(x)||,$$ for any $n\in \Z$, there exists $|s_n|<\eps$ such that $$y_n=\phi_{\rho(nt)+s_n}(y)\in N^r_{\de}(\phi_{nt}(x)).$$  But the last fact implies that the set $\{y_n\}$ is the orbit of $y_0$ under the holonomy maps $\{P_{x,nt}\}$. In additon, one has that $y_0\in S_{\de}(t,x)\cap U_{\de}(t,x)$  and therefore we must have $y_0=x$. Then $\phi_{s_0}(y)=x$ and the flow $\phi$ is R-expansive.
\end{proof}

An interesting fact about the above characterization is that we do not need to be concerned about reparametrizations since we are only working with the holonomy maps generated by $\phi$.

\subsection{Uniformity}
For the remainder of this section,  we are assuming that the flows in consideration are R-expansive, and the constant $\de$ given by Proposition  \ref{charac} will be called a constant of R-expansiveness of $\phi$.

\begin{remark}
    Notice that if $\delta$ is a $R$-expansiveness constant for $\phi$, then any $0<\delta'<\delta$ is also an $R$-expansiveness constant for $\phi$.
\end{remark}

Next, we work in order to obtain versions of some well known results about non-singular expansive flows to the R-expansive case. For $\Lambda\subset M$, we denote $$A_\Lambda=\inf_{x\in \Lambda}\{||X(x)||\}.$$ 

\begin{theorem}[Uniform R-expansiveness]\label{uef}
Suppose $\phi$ is R-expansive with constant of R-expansiveness $\de$ and let $\Lambda\subset M$ be a non-singular, compact and invariant set.
Then for any $0<\eta\leq \de A_\Lambda$ and  $t>0$, there exists $J>0$ such that if $x\in \Lambda$ and $y\in N^r_{\de}(x)$ with $d(x,y)>\eta$ , then there is $-J\leq i\leq J$ such that $$d(P_{x,it}(x),P_{x,it}(y))\geq \de||X(P_{x,it}(x))||.$$
\end{theorem}

\begin{proof}
Suppose the result is false. Thus there are $\eta>0$, $t>0$, sequences  $x_n\in \Lambda$, $y_n\in N^r_{\de}(x_n)$, $m_n\to \infty$,  such that $d(x_n,y_n)>\eta$ and $$d(P_{x_n,it}(x_n),P_{x_n,it}(y_n))\leq \de||X(P_{x_n,it}(x_n)||$$ for $-m_n\leq i\leq m_n$.
By compactness of $\Lambda$ we can suppose that $x_n\to x\in \Lambda$, $y_n\to y\in M$. Then we have $diam(N^r_{\de}(x_n))>\eta>0$ for any $x_n$. Since $X$ is a $C^1$-vector field,  the normal direction of $X$ varies continuously with $x$, so we have that  $y\in N^r_{\de}(x)$. But now, the continuity of the holonomy maps implies that $$d(P_{x,it}(x),P_{x,it}(y))\leq \de||X(P_{x,it}(x)||,$$ for every $i\in \Z$ and then $x=y$, a contradiction, since $d(x,y)>\eta$.
\end{proof}

\begin{rmk}
In \cite{KS} it is proved that any expansive flow is uniformly expansive. So in the previous result, we have $\phi|_\Lambda$ is expansive, and therefore, it is uniformly expansive by the non-singularity of $\Lambda$. But with this restriction, we only obtain a uniform time of separation between the orbits in $\Lambda$. On the other hand, our result gives a uniform time in which $\Lambda$ expels any point $x$ in a neighborhood of $\Lambda$, if $x$  is not too close to $\Lambda$. 
\end{rmk}
Next, we use the uniform R-expansiveness to obtain some "hyperbolic behavior" for the  R-stable (R-unstable) sets of points away from singularities. i.e., these sets need to contract uniformly in the future (in the past).

\begin{proposition}[Uniform contraction]\label{UnifContr}
For any $0<\eta<\de A_\Lambda$ and any $t>0$,  there is $J>0$ such that $$P_{x,nt}(S_{\de}(t,x))\subset S_{\eta}(t,P_{x,nt}(x)) \text{ and }   P_{x,-nt}(U_{\de}(t,x))\subset U_{\eta}(t,P_{x,nt}(x))$$   for every $n\geq J$ and every $x\in \Lambda$.
\end{proposition}

\begin{proof}
Let us fix  $0<\eta<\de A_\Lambda$ and $t>0$. Let $J$ be given by the previous theorem. suppose there exists $x\in \Lambda$ such that   $$P_{x,nt}(S_{\de}(t,x))\not\subset S_{\ga}(t,P_{x,nt}(x)).$$ Then there is some $y\in S_{\de}(t,x))$ and $n\geq J$ satisfying  $$d(P_{x,nt}(x),P_{x,nt}(y))>\eta.$$ By the choice of $J$ we must have $$d(P_{x,(n+i)t}(x),P_{x,(n+i)t}(y))>\de||X(P_{x,(n+i)t}(x))||$$ for some $-J\leq i\leq J$. On the other hand, since $n\geq J$, we have $n+i>0$. But $y\in \mathcal{S}_{\delta}(t,x)$, and therefore  $$d(P_{x,(n+i)t}(x),P_{x,(n+i)t}(y))\leq\de||X(P_{x,(n+i)t}(x))||,$$
a contradiction. 
\end{proof}

The following result is an easy corollary of Proposition \ref{UnifContr}.

\begin{proposition}\label{omega}
If for some $t>0$, we have $y\in S_{\eps}(t,x)$, then $\omega(x)=\omega(y)$.
\end{proposition}

 Before proving the proposition, we can make some remarks that will be used in the next results. By Proposition \ref{UnifContr}, if $y\in S_{\eps}(t,x)$, then $d(P_{x,nt}(x),P_{x,nt}(y))\to 0$ as $n\to \infty$. Arguing as in the proof of Theorem \ref{charac}, one can construct a reparametrization $\rho$ such that  $$\lim\limits_{t\to\infty}d(\phi_t(x),\phi_{\rho(t)}(y))= 0.$$

 \begin{proof}
  Let $z\in \omega(x)$ and suppose that $y\in S_{\eps}(t,x)$. If $t_k\to \infty$ is such that $\phi_{t_k}(x)\to z $, then the previous remarks implies  $\phi_{\rho(t_k)}(y)\to z$ and therefore $z\in \omega(y)$. The reverse inclusion is analogous.
\end{proof}

\begin{corollary}
Let $x$ be a periodic point with period $\pi(x)=t$. For every $0<\eta\leq A_{O(x)}$ there exists $J$ such that:  $$P_{x,nt}(S_{\de}(t,x))\subset S_{\eta}(t,P_{x,nt}(x)) \text{ and }   P_{x,-nt}(U_{\de}(t,x))\subset U_{\eta}(t,P_{x,-nt}(x)),$$   for every $n\geq J$.

\end{corollary}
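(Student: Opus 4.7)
The plan is to obtain this corollary as a direct specialization of Theorem \ref{UnifContr} applied to the orbit of $x$. Since $x$ is periodic and not a singularity, $X(x)\neq 0$, and the closed orbit
\[
K \;=\; \{\phi_s(x) : 0\leq s\leq t\}
\]
is a compact $\phi$-invariant subset of $M\setminus \sing(\phi)$. In particular $A := \inf_{y\in K}\|X(y)\| > 0$ by continuity and compactness, so the ``non-singular compact invariant'' hypothesis of Theorem \ref{UnifContr} is met.

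Given $\ga>0$, I would first treat the main case $0<\ga\leq \de A$. Applying Theorem \ref{UnifContr} to $K$ with this value in the role of the contraction constant produces some $N_\ga>0$ such that for every $y\in K$ and every $n\geq N_\ga$,
\[
P_{y,nt}\!\left(W^{r,s}_{\de,t}(y)\right) \subset W^{r,s}_{\ga,t}\bigl(P_{y,nt}(y)\bigr),
\]
together with the analogous inclusion for $W^{r,u}$ along $-nt$. Specializing to $y=x$ and using the defining feature of periodicity, namely $\phi_{nt}(x)=x$ for every $n\in\Z$ (which forces $P_{x,nt}(x)=x$), yields precisely the stated inclusions. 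For the remaining range $\ga>\de A$, monotonicity of the local $r$-stable/unstable sets in the size parameter lets us simply replace $\ga$ by $\ga':=\min\{\ga,\de A\}$; any $N$ that works for $\ga'$ works a fortiori for $\ga$.

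The only mild obstacle is to reconcile the quantification in the statement of Theorem \ref{UnifContr} (``for any $0<\eta<\de$'') with the bound $\eta<\de A$ that actually drives its proof; but since on $K=\overline{O(x)}$ one has $A>0$ automatically, this is not a real difficulty, merely a bookkeeping remark about the range of admissible constants.
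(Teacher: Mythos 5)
Your proposal is correct and is essentially the paper's own (implicit) argument: the paper states this corollary without proof precisely because it is the specialization of Theorem \ref{UnifContr} to the compact, invariant, singularity-free set $K=O(x)$, exactly as you do. Your additional remarks — that $P_{x,nt}(x)=x$ by periodicity, that monotonicity of $W^{r,s}_{\be,t}$ in $\be$ handles large $\ga$, and that the $\eta$ versus $\de A$ bookkeeping is harmless on a non-singular compact orbit — are all sound and only make explicit what the paper leaves tacit.
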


\subsection{Rescaled-stable points}
Let us now introduce the concept of R-stable and R-unstable points. We want to mention that the results here are inspired by the techniques developed in \cite{Her}.
Define the positive and negative $n$-$\eps$-R-dynamical ball
centered at $x$, respectively by:

$$N^r_t(x,n,\eps)=\{y\in N^r_{\eps}(x); d(P_{x,it}(x),P_{x,it}(y))\leq \eps||X(P_{x,it}(x))||, 0\leq i\leq n\}.$$

\begin{center}
    and
\end{center}

$$N^r_{-t}(x,n,\eps)=\{y\in N^r_{\eps}(x); d(P_{x,it}(x),P_{x,it}(y))\leq \eps||X(P_{x,it}(x))||, -n\leq i\leq 0\}.$$

\begin{definition}

We say that $x\in M\setminus \sing(\phi)$ is an R-stable  (R-unstable ) point of $\phi$ if for every  $t>0$, the set $\{S_{\eps}(t,x)\}_{\eps>0}$ ($\{U_{\eps}(t,x)\}_{\eps>0}$) is a neighborhood basis for $x$ on $N^r_{\de}(x)$. In other words, if for every $\eps>0$, there is some $\eta>0$ such that if $y\in N^r_{\eta}(x)$ and $d(x,y)\leq \frac{\eta}{L^t}||X(x)||$, then $$d(P_{x,nt}(x),P_{x,nt}(y))\leq \eps||X(P_{x,nt}(x))||$$ for every $n\geq 0$ ($n\leq 0$).
\end{definition}

The following theorem is a trivial consequence of the definitions.

\begin{theorem}\label{Rcharac}
If $\overline{O(x)}\cap \sing(\phi)=\emptyset$, then are equivalent:

\begin{enumerate}
    \item $x$ is a R-stable point.

    \item $S_{\de}(t,x)$ is a neighborhood of $x$ on $N^r_{\de}(x)$.

    \item For every $t>0$ there is  $0<\eps_0<\de$, such that for every $0<\eps\leq \eps_0$, there is  $J>0$ such that $$N^r_{t}(x,J,\eps)= S_{\eps}(t,x),$$
\end{enumerate}

\end{theorem}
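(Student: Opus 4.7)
The plan is to establish the cycle $(1)\Rightarrow(2)\Rightarrow(3)\Rightarrow(1)$. The standing hypothesis $\overline{O(x)}\cap Sing(\phi)=\emptyset$ ensures that $K=\overline{O(x)}$ is a compact invariant set without singularities, so Theorem \ref{UnifContr} (uniform contraction of R-stable sets along the orbit) applies, and $\|X\|$ is uniformly bounded above and below on $K$.

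The implication $(1)\Rightarrow(2)$ is a direct specialization of the neighborhood-basis definition to $\eps=\de$. For $(3)\Rightarrow(1)$, fix $0<\eps<\eps_0$: then $N^r_t(x,N_\eps,\eps)=\bigcap_{i=0}^{N_\eps}P_{x,it}^{-1}(N^r_\eps(P_{x,it}(x)))$ is a finite intersection of preimages of open rescaled balls under continuous, injective holonomy maps, so it is open in $N^r_\eps(x)$ and contains $x$. By $(3)$, $W^{r,s}_{\eps,t}(x)$ equals this set and hence is a neighborhood of $x$; for larger $\eps$ the monotonicity $W^{r,s}_{\eps',t}(x)\subset W^{r,s}_{\eps,t}(x)$ when $\eps'<\eps$ (proved by inspecting the defining inequalities) gives the same conclusion. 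Since $W^{r,s}_{\eps,t}(x)\subset N^r_{\eps/L^t}(x)$ and the latter has diameter going to $0$ as $\eps\to 0$, the family $\{W^{r,s}_{\eps,t}(x)\}_{\eps>0}$ is a neighborhood basis of $x$.

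The substantive step is $(2)\Rightarrow(3)$. The inclusion $W^{r,s}_{\eps,t}(x)\subset N^r_t(x,N,\eps)$ is trivial for every $N$. For the converse, I would choose $N_\eps$ by combining two ingredients: (i) by Theorem \ref{UnifContr} applied to $K$, there is $N_\eps$ such that $P_{x,nt}(W^{r,s}_{\de,t}(x))\subset W^{r,s}_{\eps,t}(P_{x,nt}(x))$ for every $n\ge N_\eps$, and (ii) by $(2)$ and continuity of the first $N_\eps$ holonomies, points $y$ satisfying the finite-time $\eps$-closeness condition at iterates $0\le i\le N_\eps$ can be forced to lie inside $W^{r,s}_{\de,t}(x)$, after possibly shrinking $\eps_0$. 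Combining (i) and (ii), $P_{x,N_\eps t}(y)\in W^{r,s}_{\eps,t}(P_{x,N_\eps t}(x))$, and the cocycle identity $P_{x,(N_\eps+m)t}=P_{P_{x,N_\eps t}(x),mt}\circ P_{x,N_\eps t}$ translates the trapping at time $N_\eps$ into the infinite-time rescaled closeness defining membership in $W^{r,s}_{\eps,t}(x)$.

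The main obstacle is reconciling the rescaling factors: the defining inequality of $W^{r,s}_{\eps,t}(x)$ uses the bound $\eps/L^t$ while $N^r_t(x,N_\eps,\eps)$ uses the bound $\eps$, so one must arrange $\eps_0$ so that finite-time $\eps$-closeness actually produces membership in $W^{r,s}_{\de,t}(x)$ where Theorem \ref{UnifContr} is available. This is exactly where the non-singularity of $\overline{O(x)}$ is used: on $K$ the multiplicative factors $\|X\|$ and $L^t$ can be absorbed uniformly, and Theorem \ref{uef} gives the required uniform finite-time control to pass from "close for $N_\eps$ steps" to "close enough initially to satisfy the $\de$-rescaled stable condition."
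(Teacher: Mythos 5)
The paper offers no proof to compare against: the authors dismiss Theorem \ref{Rcharac} as ``a trivial consequence of the definitions'' and omit the argument entirely, so your proposal is supplying something the paper leaves implicit rather than diverging from it. Your cycle $(1)\Rightarrow(2)\Rightarrow(3)\Rightarrow(1)$ is the right skeleton and the tools you name (Theorems \ref{uef} and \ref{UnifContr}, applied to the compact non-singular set $\overline{O(x)}$) are the correct ones. Two remarks. First, in $(3)\Rightarrow(1)$ the set $N^r_t(x,N_{\eps},\eps)$ is \emph{not} open --- the defining conditions use $\leq$, so it is a finite intersection of closed sets --- but since the distances vanish at $y=x$ and only finitely many holonomies are involved, it does contain an open neighborhood of $x$ in the section, which is all you need; state it that way. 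Second, your step (ii) in $(2)\Rightarrow(3)$ is more elaborate than necessary and, as phrased (``forced by continuity''), is the one assertion a referee would push back on: the clean route is simply that $(2)$ gives some $\eta>0$ with $N^r_{\eta}(x)\subset W^{r,s}_{\de,t}(x)$, so taking $\eps_0\leq\eta$ makes $N^r_t(x,N_{\eps},\eps)\subset N^r_{\eps}(x)\subset W^{r,s}_{\de,t}(x)$ automatic, after which Theorem \ref{UnifContr} controls all iterates $n\geq N_{\eps}$ and the finite dynamical-ball condition controls $0\leq n\leq N_{\eps}$; no appeal to uniform expansiveness is needed there (and an appeal to Theorem \ref{uef} would be delicate anyway, since it only guarantees separation at some time in $[-N_{\eta},N_{\eta}]$, possibly backward). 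You are right to flag the mismatch between the bound $\eps\|X\|$ in $N^r_t(x,n,\eps)$ and the bound $\frac{\eps}{L^t}\|X\|$ in $W^{r,s}_{\eps,t}(x)$; the equality in item $(3)$ can only hold up to this adjustment of constants, a looseness already present in the paper's definitions rather than a defect of your argument.
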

\begin{proof}
The proof of the implication $(1)\Rightarrow (2)$ is evident from the definition of R-stable points. 

To prove $(2)\Rightarrow(3)$, fix $t>0$ and let $\eps_0>0$ be such that  $N^r_{\eps_0}(x)\subset S_{\de}(t,x)$. Fix $0<\eps\leq \eps_0$ and let $J$ be given by Theorem \ref{UnifContr}, with respect to $\eps$.  By definition, we have $\mathcal{S}_{\eps}(t,x)\subset N^r_{t}(x,J,\eps)$.  Now, notice that $N^r_{\eps}(x,J,\eps)\subset N^r_{\eps_0}(x)$. Then Theorem \ref{UnifContr} implies $$P_{x,Jt}(N^r_t(x,J,\eps)\subset S_{\eps}(t,P_{x,Jt}(x)),$$  and therefore $N^r_t(x,J,\eps))\subset S_{\eps}(t,x)$.

To see why $(3)\Rightarrow (1)$, just notice that, by continuity, $N^r_{t}(x,J,\eps)$ is a neighborhood of $x$ in $N^r_{\eps}(x)$.
\end{proof}

\begin{remark}
    An equivalent result clearly holds for $R$-unstable points.
\end{remark}

Hereafter we will always suppose $x\in \Lambda$, where $\Lambda$ is a compact invariant set without singularities.
Before stating our next result, we would like to state a result from \cite{JNY}, which will be used in our next proof. Let us denote $C_{\phi}(M)
$ for the set of non-negative functions $f:M\to [0,\infty)$ such that $f(x)=0$ if, and only if $x\in \sing(\phi)$.  Note that for any $\de>0$, the functions $\de||X(x)||$ belongs to $C_{\phi}(M)$. The next result will help us to find continuity properties for the R-holonomy maps.

\begin{lemma}[\cite{JNY}]\label{ContRH} Let $\phi$ be a continuous flow on $M$.
\begin{enumerate}
\item For any $e\in C_{\phi}(M)$ and $T>0$ we can find $r\in C_{\phi}(M)$ such that:\\ if $d(x,y)\leq r(x)$, then $$d(\phi_{t}(x),\phi_{t}(y))\leq e(\phi_t(x)),$$ for every $t\in [-T,T]$
\item For any $e\in C_{\phi}(M)$ there is some $r\in C_{\phi}(M)$ such that $$r(x)\leq\max\{e(y);y\in B_{r(x)}(x) \}.$$
\end{enumerate}
\end{lemma}

\begin{proposition}\label{Sperio}
If $x\in \Lambda$  is R-stable and recurrent, then $x$  is periodic.
\end{proposition}

\begin{proof}
Suppose that $x\in \Lambda$ is recurrent and R-stable. Fix $\eta>0$ such that $N^r_{\eta}(x)\subset S_{\eps}(t,x)$. Since $x$ is recurrent we can find a sequence $t_k\to \infty $ such that $\phi_{t_k}(x)\to x$. By Theorem \ref{RFB}, after by possibly reduing $\eta$,  we have $B^{r}_{\frac{\eta}{3}}(x)$  is contained on the R-flow box of $N^{r}_{\eta}(x)$. Notice that by euclidean algorithm we can write every $t_k$  in the form $$t_k=n_kt+r_k, $$
where $0\leq r_k<t$. 

 If we choose  $t_k$ big enoguh, we can assume $\phi_{t_k}(x)\in B^r_{\frac{\eta}{3}}(x)$. Due to Theorem \ref{UnifContr}, after possibly enlarging $t_k$,  we can also assume $diam(P_{x,n_kt}(N^r_{\eta}(x))$ is very small. By the continuity of $\phi$, for any $y\in P_{x,n_kt}(N^r_{\eta}(x))$, there is $r^y_k$ close to $r_k$ such that $$\phi_{r^y_k}(y)\in N^r_{\eta}(x).$$ Therefore, we can define a projection  $$\pi: P_{x,n_kt} (N^r_{\eta}(x))\to N^r_{\eta}(x)$$ by setting  $\pi(y)=\phi_{r^y_k}(y)$. Now, notice that $$F=\pi\circ P_{x,n_kt}:N^r_{\eta}(x)\to N^r_{\eta}(x)$$ is a $C^1$  map. Thus, we can use the Brower fixed point theorem to find a fixed point $z\in N^r_{\eta}(x)$ for $F$. Consequently, $z$ is a periodic point for $\phi$.  Finally,  by the previous proposition, we have that $x\in \omega(x)=\omega(z)=O(z)$ and this finishes the proof.
\end{proof}

In the next results, we will see that the R-stable points of a non-singular subset $\Lambda$ of an R-expansive flow are formed by periodic orbits that are isolated from $\Lambda$.

\begin{proposition}\label{R-Open}
Suppose $\phi$ is R-expansive and let $x\in M$ be such that $\overline{O(x)}\cap \sing(\phi)=\emptyset$. If $x$ is a R-stable point, there is a neighborhood of $x$ on $N^r_{\de}(x)$ formed by R-stable points.
\end{proposition}

\begin{proof}

Suppose that $x$ is a R-stable point and fix $0< 4\eps<\de$ such that

 $$\left(\bigcup_{t\geq 0}\overline{N^r_{\eps}(\phi_t(x)})\right)\cap \sing(\phi)=\emptyset$$

Since $x$ is R-stable, then there is some $0<\eta<\eps$ such that $N^r_{\eta}(x)\subset S_{\eps}(t,x)$. This implies that there is $A>0$ such that if $y\in N^r_{\eta}(x)$, then  $\inf\limits_{t\geq 0}\{||X(\phi_t(y))||>A>0$. Now fix $\nu>0$ and set $0<\ga\leq \nu A$. Fix  $y\in N^r_{\eta}(x)$. Proposition \ref{UnifContr} combined with Theorem \ref{RFB} implies that we can find $T>0$ such that  $$d(P_{y,nt}(y),P_{y,nt}(z))\leq \frac{\ga}{L^t}$$ for any $z\in B_{\eta}(x)$ and for any $n\geq N$. Finally, the continuity of the holonomy maps allows us to find $\mu>0$ (Lemma \ref{ContRH}) such that if $z\in N^r_{\eta}(x)$ and $d(z,y)<\mu$, then $$d(P_{y,nt}(y),P_{y,nt}(z))\leq \frac{\ga}{L^t}$$ for $0\leq n\leq N_{\eta}$.  But this implies $N^r_{\mu}(y)\subset S_{\nu}(y,t)$ and therefore, $y$ is R-stable.
\end{proof}

\begin{lemma}\label{vizest}
Let $\Lambda$ be a non-singular set and fix $t>0$. Suppose $x\in \Lambda$ is an R-stable point. There exist $\rho>0$ and $T>0$ such that $$N^r_{\rho}(P_{x,-nt}(x))\subset S_{\frac{\de}{3}}(t,P_{x,-nt}(x)),$$ for every $n\geq N$.    
\end{lemma}

\begin{proof}
    Let $\eps>0$ be such that $N^r_{\eps}(x)\subset S_{\frac{\de}{3}}(t,x)$ and let $T_{\eps}$ be given by Theorem \ref{uef}.  By continuity, we can find  $\rho>0$ such that $N^r_{\rho}(y)\in N^r_{-t}(y,T_{\eps},\eps)$, for any $y\in \Lambda$. We claim that the Lemma holds for $T=2T_{\eps}$. Indeed, first notice that $$N^r_{t}(P_{x,-nt}(x),n,\eps)\subset S_{\frac{\de}{3}}(t,P_{x,-nt}(x)),$$
     for every $n\geq0$.
     If the lemma does not hold, there should be $n\geq T$  and $$y\in N^r_{\rho}(P_{x,-nt}(x))\setminus N^r_{t}(P_{x,-nt}(x),n,\eps).$$

     In particular we can find $$z\in N^r_{\rho}(P_{x,-nt}(x))\cap \partial N^r_{t}(P_{x,-nt}(x),n,\eps)$$
     Therefore there is $k> T_{\eps}$ such that $$d(P_{x,-(n+k)t}(x),P_{x,-(n+k)t}(z))\geq\eps A_{\Lambda}.$$
     
     But now, Theorem \ref{uef} implies 

     $$\max\limits_{|j|\leq T_{\eps}}\{d(P_{x,(-n+k+j)t}(x),P_{x,-(-n+k+j)t}(z))\}>\frac{\de}{3}A_{\Lambda}  $$
     contradicting the choice of $z$.
\end{proof}

\begin{theorem}\label{Stable}
Let $\phi$ be an R-expansive flow and $\Lambda\subset M$ be a compact invariant set without singularities. If $x\in \Lambda$ is an R-stable or R-unstable point, then $x$ is periodic.

\end{theorem}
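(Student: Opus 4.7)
By the symmetry between R-stable and R-unstable points under time reversal, I will assume $x$ is R-stable; the strategy is to locate a recurrent R-stable point inside $\omega(x)$ and then invoke Theorem \ref{Sperio}. The first observation I would make is that R-stability propagates along the orbit of $x$: every $\phi_s(x)$ inherits R-stability because the rescaled cross-sections at $x$ and $\phi_s(x)$ are identified by the R-holonomy $P_{x,s}$ (Theorem \ref{RFB}), and the defining inequalities for R-stability transform consistently under this identification. Thus the whole orbit of $x$ consists of R-stable points.

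Next I would extract a recurrent R-stable point from $\omega(x)$. Since $K$ is compact, invariant, and non-singular, Zorn's lemma produces a minimal set $M_0\subset\omega(x)$, and every $y\in M_0$ is uniformly recurrent. Pick $t_n\to\infty$ with $\phi_{t_n}(x)\to y$. Each $\phi_{t_n}(x)$ is R-stable, so by Theorem \ref{R-Open} it admits a neighborhood of R-stable points inside $N^r_{\de}(\phi_{t_n}(x))$. Sliding a nearby point of $O(y)$ into $N^r_{\de}(\phi_{t_n}(x))$ by a short flow time places it inside this R-stable neighborhood for $n$ large; flow-invariance of R-stability then forces $y$ itself to be R-stable. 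Applying Theorem \ref{Sperio} to the recurrent R-stable point $y$ yields that $y$ is periodic of some period $\tau>0$.

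The final step is to promote periodicity from $y$ to $x$. Because $y$ is R-stable and periodic, Theorem \ref{R-Open} gives an open R-stable neighborhood $V$ of $y$ in $N^r_{\de}(y)$ whose points are asymptotically rescaled-attracted to $O(y)$ by Theorem \ref{UnifContr}. For $n$ large, $\phi_{t_n}(x)\in V$, so the forward orbits of $x$ and $\phi_\tau(x)$ both track $O(y)$, yielding $d(\phi_t(x),\phi_{t+\tau}(x))\leq\de\|X(\phi_t(x))\|$ for all sufficiently large $t$. Extending this inequality to all $t\in\R$ via the backward part of Theorem \ref{UnifContr} applied through the periodic orbit $O(y)$ and then invoking R-expansiveness to the pair $(x,\phi_\tau(x))$ forces $\phi_\tau(x)\in\phi_{[-\eps,\eps]}(x)$, so $x$ is periodic.

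The main obstacle is the transfer of R-stability from the sequence $\phi_{t_n}(x)$ to the limit $y$: the R-stable neighborhood sizes supplied by Theorem \ref{R-Open} along the orbit of $x$ could a priori shrink as $n\to\infty$, so one must extract a uniform lower bound using the compactness of $M_0\subset K\setminus\sing(\phi)$ together with the uniform rescaled flow-box control of Theorem \ref{RFB} applied on the orbit closure of $x$. A related but milder difficulty in the final step is that R-stability only provides forward-time control, whereas R-expansiveness demands rescaled closeness on all of $\R$; bridging this gap requires exploiting the periodicity of $O(y)$ to propagate control backward through a Poincaré-like return map at $y$.
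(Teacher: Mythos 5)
Your overall skeleton (reduce to a recurrent R-stable point and invoke Theorem \ref{Sperio}) matches the paper's, but the two steps you yourself flag as delicate are exactly where the argument fails to close, and neither is repairable by the tools you cite. First, the transfer of R-stability from the sequence $\phi_{t_n}(x)$ to the limit point $y$: you propose to get a uniform lower bound on the radii of the R-stable neighborhoods from ``compactness of $M_0$ together with the rescaled flow-box control of Theorem \ref{RFB},'' but the radius produced by Theorem \ref{R-Open} at a point depends on a modulus of continuity of the holonomy maps over an unbounded range of times (the $N_\eta$ and $\mu$ in its proof) and is in no way semicontinuous in the base point, so compactness gives nothing. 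This uniformity is the actual core of the paper's proof: its central Claim asserts uniform-size balls $A^z_{\ga}(P_{x,-n_kt}(x))\subset W^{r,s}_{\de,t}(P_{x,-n_kt}(x))$ along a backward-orbit sequence, and is proved by contradiction using R-expansiveness --- if the radii shrink, one extracts boundary points realizing separation exactly $\eps$ at some holonomy time, passes to a limit pair $(x^*,y^*)$ with $d(x^*,y^*)=\eps$ that never separates, contradicting Theorem \ref{charac}. Without an argument of this kind your step ``flow-invariance of R-stability then forces $y$ itself to be R-stable'' is unsupported.

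Second, and independently, your final step is circular. Knowing that $y\in\omega(x)$ is periodic of period $\tau$ and that $x$ is forward-asymptotic to $O(y)$ does not give the hypothesis of R-expansiveness for the pair $(x,\phi_\tau(x))$: you need $d(\phi_t(x),\phi_{\rho(t)+\tau}(x))\le\de\|X(\phi_t(x))\|$ for \emph{all} $t\in\R$, and R-stability controls only forward time. The backward orbit of $x$ has no reason to stay near $O(y)$ --- if it did, $x$ would already be recurrent, which is precisely what is at issue (think of a point spiraling into an attracting periodic orbit: forward tracking holds, periodicity fails, and only expansiveness rules this out, via backward information you do not have). The paper avoids this trap entirely by working with $\al(x)$ rather than $\omega(x)$: the uniform stable neighborhoods along the backward iterates $P_{x,-n_kt}(x)\to z\in\al(x)$, pushed forward by the holonomies, force the returns $\phi_{(n_1-n_k)t}(x)$ back into $B_{2\eps}(x)$, so that $x\in\al(x)$; then Theorem \ref{Sperio} applies to $x$ itself and its proof places $x$ on the periodic orbit it constructs. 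You would need to restructure your argument along these lines rather than promote periodicity from $y$ to $x$.
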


\begin{proof}

We will only present a proof for R-stable points, since the case of R-unstable points is totally analogous.  Suppose $x\in \Lambda$ is an R-stable point, let $\de>0$ be an R-expansiveness constant of $\phi$ and fix $t>0$. By the compactness of $\Lambda$, we can find a sequence $n_k\to \infty$ such that $$\lim\limits_{k\to\infty}P_{x,-n_kt}(x)= z.$$  In particular, $z$ is a regular point. Let  $0<\gamma<\de$ be such that $N^r_{\gamma}(y)$ is well defined and it is a cross-section of time $t$, for every $y\in \Lambda$. Let $\rho>0$ and $T>0$  be given by Lemma \ref{vizest}.  For every $n_k\geq T$, denote $$V_k=\phi_{[-\xi,\xi]}(N^r_{\rho}(P_{x,-n_k t}(x)).$$ 
Notice that $z\in int(V_k)$, if $k$ is big enough.
\\
\par \textit{\textbf{Claim:} $z$ is a R-stable point.}
\\

To see why the claim holds, we first fix $n_k\geq T$.
 Since $z\in V_k$, one can find $-\xi<s_z<\xi$  such that $$\phi_{s_z}(z)\in N^r_{\rho}(P_{x,-n_kt}(x)).$$ 
 By  Proposition \ref{vizest}, $\phi_{s_z}(z)$ is a R-stable point. Now, Proposition \ref{omega} implies $$\omega(P_{x,n+kt}(x))=\omega(x)=\omega(z).$$ Moreover, there is a reparametrization $\rho_z$ such that $$\lim\limits_{s\to\infty}d(\phi_s(P_{x,n_kt}(x)), \phi_{\rho_z(s)}(\phi_{s_z})(z))=0.$$ 
But this implies $z\in \omega(z)$ and therefore $z$ is recurrent. By Theorem \ref{Sperio}, we obtain that $z$ is periodic, and as a consequence, $x$ is also periodic.

\end{proof}

\begin{corollary}\label{isolated}
 If $x\in \Lambda$ is a R-stable or R-unstable point, then $O(x)$ is isolated from $\Lambda$. In particular, $\Lambda$ contains at most a finite number of orbits of $R$-stable or R-unstable points.  
\end{corollary}

\begin{proof}
Let $x\in \Lambda$ be a $R$-stable point. By Theorem \ref{Stable} $O(x)$ is a compact set. Suppose $O(x)$ is not isolated from $\Lambda$, then there is a sequence o points $x_n\in \Lambda\setminus O(x)$ such that $x_n\to x$. By combining Theorems \ref{R-Open}  and \ref{Stable} we obtain that  $x_n$ is periodic for $n$ sufficiently large, contradicting the expansiveness of $\phi_t|_{\Lambda}$.   
    
\end{proof}

\subsection{Existence of non-trivial stable sets}
If $A\subset X$ and $x\in A$, we denote by $C(A,x)$ the connected component of $A$ containing $x$. For any $x\in M$, $t>0$ and $\eps>0$, we denote  $$CS_{\eps}(t,x)=C(S_{\eps}(t,x),x) \textrm{  and } CU_{\eps}(t,x)=C(U_{\eps}(t,x),x).$$

Our first result deals with the existence of connected pieces of local R-stable and R-unstable sets
with large diameter.
This problem was first solved for expansive homeomorphisms of surfaces by J. Lewowicz and K. Hiraide, who independently classified such systems (they are conjugate to pseudo-Anosov diffeomorphisms).
This was later generalized for Bowen-Walters expansive flows in \cite{KS}.
Let us  denote $\Gamma^r_{\ga}(p)$ for the sphere of radius $\ga||X(p)||$ centered at $p$.

\begin{theorem}\label{ConStab}
If $\phi$ is an R-expansive flow,
$\Lambda\subset M$ is a compact invariant set without singular points and $\delta>0$
then there is $\eta>0$ such that
$$CS_{\de}(t,p)\cap \Gamma^r_{\eta}(p)\neq \emptyset \textrm{ and }  CU_{\de}(t,p)\cap \Gamma^r_{\eta}(p)\neq \emptyset.$$
for all $t>0$ and any point $p\in \Lambda$ which is not $R$-stable or $R$-unstable.
\end{theorem}

\begin{proof}
The proof is based on the following claim:

\textit{Claim: For every $0<\eps<\de$, and $\eta>0$, there is some $K=K_{\eps,\eta}$ such that $$N^r_{\eta}(x)\not\subset N^r_t(x,K,\eps)\textrm{ and } N^r_{\eta}(x)\not\subset N^r_{-t}(x,K,\eps)$$
for every $x\in \Lambda$.}

If the claim is false, we can find $\eps>0$ and $\eta>0$ and a sequence of points $x_k\in \Lambda$ such that $N^r_{\eta}(x_k)\subset N^r_t(x_k,k,\eps)$ for any $k>0$. Now, if we suppose that $x_k\to x$, then $x$ must an R-stable point of $\Lambda$ and this is a contradiction. The case of R-unstable points is analogous and the claim is proved.

Now fix $x\in \Lambda$, $0<\eps<\de$ and  let $T>0$ be given by Theorem \ref{uef}. Let $\eta>0$ be such that if $d(x,y)\leq \eta$, then $d(P_{x,nt}(x),P_{x,nt}(y))\leq \eps$ for $|n|=0,..., T$. Fix some $n\geq \max\{T,K_{\eps,\eta}\}$. By the claiming, we have that $$P_{x,-nt}(N^r_{\eta}(P_{x,nt}(x)))\not\subset C(N^r_t(x,n,\eps),x).$$ Thus there is some $$y_0\in P_{x,-nt}(N^r_{\eta}(P_{x,nt}(x))\cap \partial C(N^r_t(x,n,\eps),x).$$
In particular, this implies that for some $0\leq k\leq n$, we have that $$d(P_{x,kt}(x),P_{x,kt}(y_0))=\eps.$$

But now, $k\notin [n-T,n-1]$, by the choice of $\eta.$ Also $k\notin [T,n-T]$, otherwise there should exists some $0\leq j\leq n$ such that $d(P_{x,jt}(x),P_{x,jt}(y_0))> \de$ contradicting $y_0\in N^{r}_{t}(x,n,\eps)$. Thus, $k\in [0, T]$ and therefore $d(x,y_0)>\eta$, by the choice of $\eta$.

Finally, we have that for any $n\geq \max\{T,K_{\eps,\eta}\}$ we have that $C(N^{r}_t(x,n,\eps),x)$ is a connected set with diameter greater than $\eta$. Thus by the compactness of the continuum hyperspace of $M$ we have that the set $$\bigcap_{n>0}\overline{C(N^r_t(x,n,\eps),x)}$$ is connected set contained in $S_{\eps}(t,x)$ with diameter greater than $\eta$. Since the case for the R-unstable sets is analogous, the theorem is proved.
\end{proof}

\begin{remark}
    In the previous result, we are assuming $\Lambda$ is non-singular. Thus, $\phi|_{\Lambda}$ is $BW$-expansive. This, combined with the results in \cite{ACP}, could lead us to wonder whether the existence of a non-trivial $BW$-expansive subset $\Lambda$ implies the existence of non-trivial connected components of stable or unstable sets for points in $\Lambda$, concerning the flow $\phi|_{\Lambda}$. This is not true. Indeed, notice that if $p$ is periodic, then $\phi|_{O(p)}$ is clearly $BW$-expansive, by in this case, the connected  component of  $CS_{\delta}(p)$  and  $CU_{\delta}(p)$ are $\{p\}$. This is due to the lack of enough dimension on $\phi|_{O(p)}$ to reproduce the arguments in \cite{ACP}. This illustrates that, although in our result $\phi|_{\Lambda}$ is $BW$-expansive, we need the global $R$-expansiveness to derive the conclusion. 
\end{remark}

\section{The Topological Entropy of R-expansive flows and Some Examples}\label{entropysection}

In this section, we will explore the topological entropy of R-expansive flows. We will divide this section into two parts. The former deals with general R-expansive flows containing Lyapunov stable sets, while in the second part, we will present some examples of R-expansive flows to which our results can be applied or not.

\subsection{R-expansive flows with non-singular Lyapunov stable sets.}

In the next result, we derive some conditions to obtain positive topological entropy for R-expansive flows.

\begin{theorem}\label{LyapEntr}

    Let $\phi$ be a R-expansive flow. If there exists a non-singular Lyapunov stable set  $\Lambda\subset M$, such that $\Lambda$ is not a finite union of compact orbits, then $h(\phi)>0$.
\end{theorem}

\begin{proof}
Let $\Lambda$ be a non-singular set as in the hypothesis. Let $\Lambda'$ be the set formed by the orbits of all $R$-stable points and the orbits of all $R$-unstable points of $\Lambda$. Since $\Lambda$ is not a finite union of periodic orbits, $\Lambda_0=(\Lambda\setminus \Lambda')\neq \emptyset$. Moreover, Corollary \ref{isolated} implies $\Lambda_0$ is compact, invariant, and without $R$-stable or $R$-unstable points. In addition, the Lyapunov stability of $\Lambda$ implies that $\Lambda_0$ is also Lyapunov stable. Now, Theorem \ref{Stable} implies that there exists $\eta>0$ such that $CU_{\eta}(t,x)$ is non-trivial for any $x\in \Lambda_0$. Let us now  fix some constants:

\begin{enumerate}
\item Fix $\de>0$  the constant of R-expansiveness of $\phi$.

\item Fix $0<\eps\leq\de$ such that $\overline{B_{\eps}(\Lambda_0)}\cap \sing(\phi)=\emptyset$.

\item Let  $\ga>0$ be given by the Lyapunov stability of $\Lambda_0$ with respect to $\eps$.
\item fix $x\in \Lambda_0$ and let $y\in CU_{\eta}(t,x)$ be such that $y\neq x$.
\end{enumerate}

Proposition \ref{UnifContr} implies that $d(P_{x,-nt}(y),P_{x,-nt}(x))\to 0$, while Theorem \ref{RFB} implies that in fact $d(O(x),\phi_{-t}(y))\to 0$. On the other hand, the Lyapunov stability of $\Lambda_0$  guarantees that $\phi_t(y)\in B_{\eps}(\Lambda_0)$ for any $t\geq0$ (see Figure 1).

Last facts  imply that $$\Lambda_1=\overline{\bigcup_{x\in \Lambda_0}\bigcup_{t\in \R}\phi_t({\overline{CU_{\eta}(t,x)}})}$$
is a compact and invariant set contained in $\overline{B_{\eps}(\Lambda_0)}$ and hence non-singular. In particular, it is expansive and has dimension greater than one, since it contains $O(x)$ and $U_{\eta}(t,x)$. So, we conclude by Theorem \ref{EntCW} that $h(\phi)>0$.

\begin{figure}[h]
  \centering
 \includegraphics[scale=0.45]{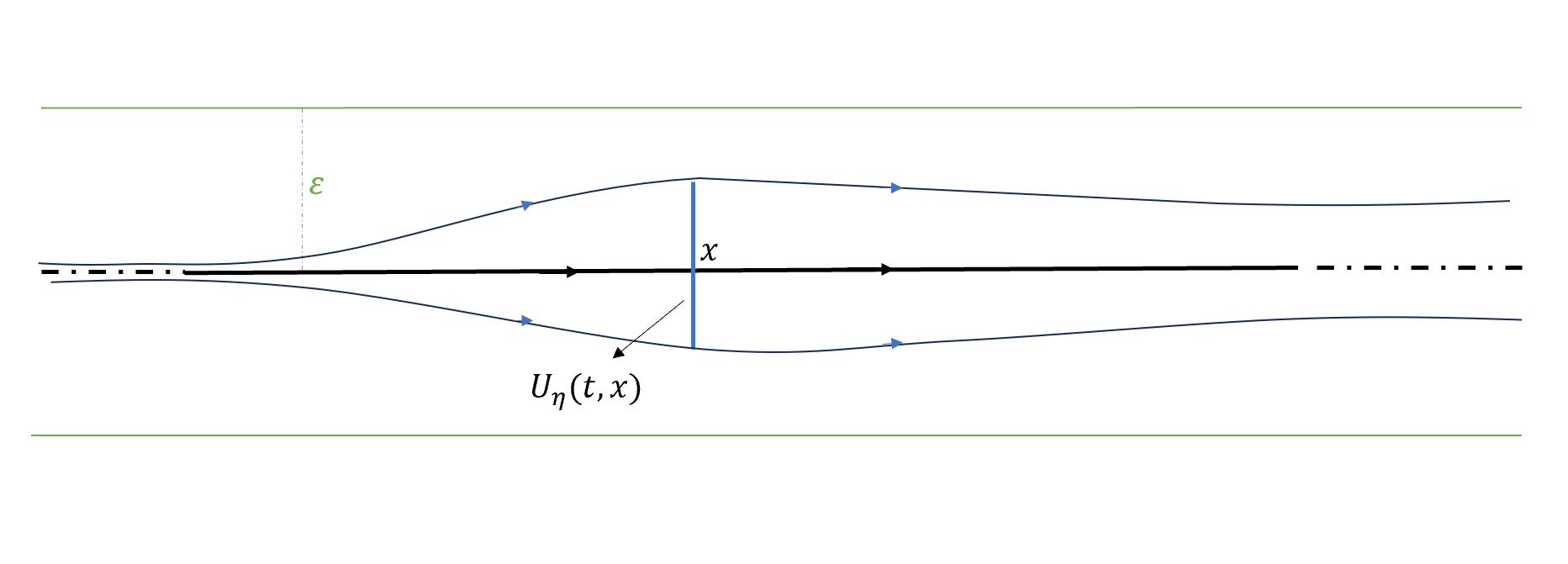}
    \caption{The idea behind the proof of Theorem \ref{LyapEntr}}
   \label{fig:my_label}
\end{figure}
\end{proof}

In contrast with Theorem \ref{EntCW},  in the previous result, we are not assuming any dimensional hypothesis on $\Lambda$. Instead, our techniques allow us to construct a compact and invariant with enough dimension, even if we begin with a one-dimensional set $\Lambda$.  In addition, we obtain as an immediate consequence of Theorems \ref{LyapEntr} and \ref{artiguerelation} we obtain the following result:

\begin{corollary}\label{EntHip}
    Let $\phi$ be a $k^*$-expansive and $\Lambda$ be a non-singular Lyapunov stable subset of $M$. If  $\Lambda $ is not a finite union of periodic orbits, then $h(\phi)>0$.
\end{corollary}

\subsection{Examples}\label{examples}

We end this work by presenting some new examples of R-expansive flows and showing their relation with the results we obtained. Our first example illustrates that although $k^*$-epansiveness implies R-expansiveness, there are examples of non-trivial R-expansive flows that are far from being $k^*$-expansive. Furthermore, our results do not apply to this example.

\begin{example}\label{ExR2}
 Consider $M=\mathbb{T}^3=S^1\times\mathbb{T}^2$. We begin by defining a periodic flow $\psi$ on $M$ induced by a vector field with velocity constant and equal to one. Here we will see  $M$ as the product $[-2,2]\times \T^2$, where the end points of $[-2,2]$ are identified.  Let us consider on $M$ the vector field $X$ constant and equal to $(1,0,0)$. Thus $X$ generates the flow $\psi$ desired.

Now we modify this flow to obtain an R-expansive flow. First consider a smooth non-negative function $\rho$ on $M$ satisfying the following conditions:

\begin{enumerate}

\item $\rho$ is  constant along the fibers $\{x\}\times \T^2$.
\item $\rho((x,y,z))=1$, if $(x,y,z)\in [-2,-1]\times \T^2$ or $(x,y,z)\in [1,2]\times \T^2$. 
\item $\rho((x,y,z))=0$ if, and only if, $(x,y,z)\in \{0\}\times \T^2$.
\item $\rho((x,y,z))$ decreases in $[-1,1]\times \T^2$, as $x\to 0$.

 \end{enumerate}

 \begin{figure}[h]
     \centering
   \includegraphics[scale=0.58]{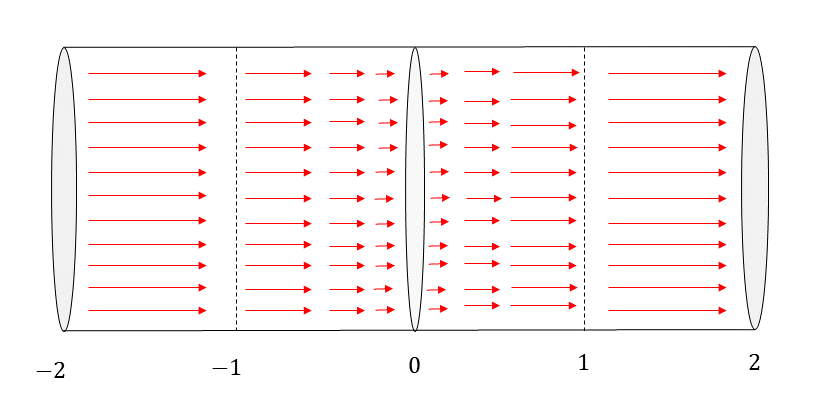}
   \caption{Chain-transitive R-expansive flow on $\mathbb{T}^3$ with zero topological entropy.}
    \label{fig:my_label}
 \end{figure}

Let $\phi$ be the flow generated by the field $\rho X$ (see  Figure 3).

\vspace{0.1in}
\textit{Claim: $\phi$ is R-expansive}

\vspace{0.1in}

To prove the claim, we proceed as follows. Fix some regular point $p=(x,y,z)\in M$. Notice that $\{x\}\times \T^2$ is a cross-section through $p$ for any time $t>0$. So fix some $t>0$ and $\de>0$. The set $S_{\de}(t,x)(p)$ is formed by all the points $q\in N^r_{\de}(p)$ such that
$$d(P_{p,nt}(p),P_{p,nt}(q))\leq ||X(P_{p,nt}(p))||$$
for any $n>0$. But by the choice of $\rho$, one  $||X(P_{p,nt}(p))||\to 0$ as $n\to \infty$.
On the other hand, $\phi$ acts isometrically on the fibers $\{x\}\times \T^2$ and this implies that $$d(P_{p,nt}(p),P_{p,nt}(q))=d(p,q)$$
for any $q\in N^r_{\de}(p)$ and every $n\in \Z$. Thus we have that $S_{\de}(t,x)(p)=\{p\}$. A similar argument shows that $U_{\de}(t,x)(p)=\{p\}$. This proves that $\phi$ is R-expansive. In addition, $h(\phi)=0$, since the non-wandering set of $\phi$ is formed by fixed points. Finally, notice that our results do not apply to this example, since $\overline{O(X)}\cap Sing(X)\neq\emptyset$, for every $x\in M$.
\end{example}

\begin{remark}
    In \cite{P}, it is proved that $BW$-expansive flows cannot exist on three-dimensional manifolds with a fundamental group of sub-exponential growing. On the other hand, in \cite{Ar4}, it is proved that the same does not hold for $k^*$-expansive flows by exhibiting an example of such a flow on $S^3$. The previous example also shows that the same does not hold for R-expansive flows which are not $k^*$-expansive.
\end{remark}

The next is an example to illustrate our main results.

\begin{example}\label{double-rovella}
To begin with, let $A\subset \R^3$ be the geometric Rovella's attractor defined as in \cite{CSM}. For a detailed discussion on the construction and properties of the Rovella attractor, we refer the reader to \cite{Ro} and \cite{MM}. By following techniques analogous to the techniques used in \cite{Ar4}, one can obtain $A$ by a flow $X$ on  $\mathbb{R}^3$ with the following properties:

 \begin{enumerate}

 \item There exists a solid two-torus $S\subset \R^3$ such that the vector field $X$ is transversal to $\partial S$ and points inwardly $S$.
 \item $A\subset int(S)$.
 \item $A=\cap_{t\geq0} \phi'_t(S)$, where $\phi'$ is the flow generated by $X$.

 \end{enumerate}
  So, by considering $-X$, we obtain a Rovella's repeller $R$  delimited by a two-torus $S$ and whose $-X$ is transversal to $S$ and points outwardly $S$.

  Next, we construct another two-torus $S'$ as follows. First, consider the vector field $Y$ obtained in Subsection 1.2.3  of \cite{Araujo} on the solid torus $T$. The vector field $Y$ is inwardly transversal to $\partial T$ and generates a flow $\psi$ such that $$\bigcap_{t\in\R}\psi_t(T)=\Gamma\cup O,$$
  where $\Gamma$ is a suspension of a Plykin attractor and $O$ is a hyperbolic source. 

  Let $T'$ be a copy of $T$ and $Y'$ be a copy of $Y$ over $T'$. Let $S'$ be the solid two-torus obtained by connecting $T$ and $T'$ along a cylinder $C$ as in Figure 4.

   \begin{figure}[h]\label{two-toro}
     \centering
   \includegraphics[scale=0.58]{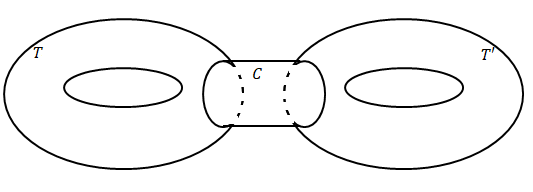}
   \caption{Construction of $S'$.}
    \label{fig:my_label}
 \end{figure}

  Now, we want to definite a vector $X'$ on $S'$ which is inwardly transverse to $\partial S'$ and, when restricted to $int(T)$ and $int(T')$, equals $Y$. This construction can be coherent by adding a hyperbolic saddle singularity on $C$ as in Figure 5.

   \begin{figure}[h]\label{FlowinC}
     \centering
   \includegraphics[scale=0.58]{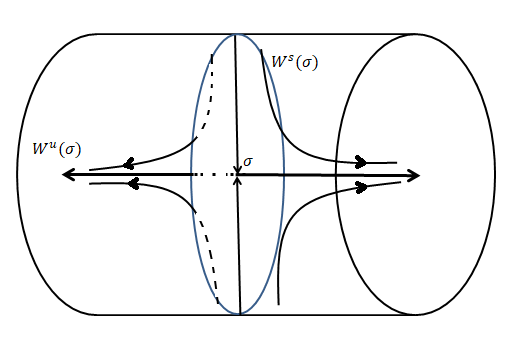}
   \caption{The hyperbolic saddle in $C$.}
    \label{fig:my_label}
 \end{figure}

  By the theory of collars (see \cite{Koz}), o,ne can smoothly glue $S$ and $S'$ along their boundaries by a diffeomorphism $\rho:\partial S\to \partial S'$.  
  This generates a smooth, compact, and boundaryless manifold $M$. From now on, we regard $\partial S$ and a submanifold of $M$.
  
  Moreover, since $-X$ is outwardly transverse to $\partial S$ and $X'$ is inwardly transverse to $\partial S$, the collar theory allows us to obtain a  small  neighborhood $U$ of $\partial S$ and a $C^1$-vector field $V'$ over $M$ such that $V'|_{S\setminus U}=-X$, $V'|_{S'\setminus U}=X'$ and $V'|_{U}$ induces a tubular flow through $\partial S$.  
In this way, the flow induced by $V'$ has the following properties: 
\begin{enumerate}
    \item $M$ contains two hyperbolic attractors $\Gamma_1$ and $\Gamma_2$ which are suspensions of the Plykin attractor. 
    \item $M$ contains a Rovella's repeller $\Lambda$.
    \item $M$ contains a pair of hyperbolic sources $O_1$ and $O_2$.
    \item $M$ contains a hyperbolic saddle singularity $\sigma$ disjoint from $\Lambda\cup \Gamma_1\cup \Gamma_2\cup O_1\cup O_2$.
    \item The orbit of any point in $M$ which is not contained $\Omega(V')\cup W^u(\sigma)$ transversely crosses $\partial S$.
\end{enumerate}

Notice that in order to apply our main results, we need to ensure that $V'$ is R-expansive. For this sake, we modify $V$ similarly to the Example \ref{ExR2}.

  Let $U'\subset U$ be a neighborhood of $\partial S$  and let $\rho:M\to R$ be a $C^{\infty}$-function such that:
 \begin{itemize}
 \item $\rho(x)\geq 0$, for every $x\in M$.
  \item $\rho(x)=1$, for every $x\in M\setminus U'$.
  \item $\rho(x)=0$ if, and only if $x\in \partial S$.
\end{itemize}
Next consider $V=\rho V'$ and let $\phi_t$ be the flow induced by $V$. Notice that now we have $$\Omega(V)=\Gamma_1\cup \Gamma_2\cup \Lambda\cup O_1\cup O_2\cup \sigma\cup \partial S,$$
where all the above unions are pairwise disjoint.

Notice that  if $x$ is  wandering point, then $x$ satisfies only one of the following behaviors.
\begin{enumerate}
    \item $x\in W^u(\sigma)$.
    \item $\phi_t(x)\to \Gamma_1\cup \Gamma_2\cup \sigma$ as $t\to \infty$ and there is some $p\in \partial S$ such that  $\phi_{t}(x)\to p$, as $t\to -\infty$.
    \item There is some $p\in \partial S$ such that $\phi_t(x)\to p$, as $t\to \infty$ and $\phi_{t}(x)\to \Lambda\cup O_1\cup_2\cup \sigma$, as $t\to -\infty$.
\end{enumerate}

\begin{claim}$\phi_t$ is  R-expansive.
\end{claim}
\begin{proof} 
Note that the R-expansiveness of $\Omega(Y)$ is immediate. Indeed, $\Gamma_1$ and $\Gamma_2$ are hyperbolic and, therefore, expansive. Since $\Lambda$ is $k^*$-expansive, then it is $R$-expansive and finally, $O_1,O_2,\sigma$ and $\partial S$ are trivially $R$-expansive. Next, we divide the proof by cases:

\textit{Case 1:} If $x$ is wandering and $y\in \Omega(V)$, then $x$ and $y$ are separated by conditions $(2)$ and $(3)$ above. 

\textit{Case 2:} If $x$ and $y$ are wandering points and $x$ satisfy condition $(2)$ and $y$ satisfy condition $(3)$, then $x$ and $y$ are also trivially separated.

\textit{Case 3:} If both $x$ and $y$ satisfy condition $(2)$ then $\phi_t(x)\to p_x$ and $\phi_t(y)\to p_y$ as $t\to -\infty$. Since, $V(\phi_t(x))\to 0$ as $t\to -\infty$, then $$d(\phi_t(x),\phi_{h(t)}(y))<\delta |V(\phi_t(x))|,$$ for every $t\in \R$, if and only if, $p_x=p_y$. But this implies $y\in O(x)$.

\textit{Case 4:} If  $x\in W^u(\sigma)$ and $y\notin W^u(\sigma)$ then $x$ and $y$ are separated by $\phi_t$.

\textit{Case 5:} If both $x$ and $y$ are contained in $W^u(\sigma)$, but in different components of $W^u(\sigma)\setminus \{\sigma\}$, then $x$ and $y$ are separated in the future by $\phi_t$.

\textit{Case 6:} If both $x$ and $y$ are contained in the same component of $W^u(\sigma)\setminus \{\sigma\}$, then $x$ and $y$ are separated in the future by $\phi_t$, unless they $y\in \phi_{[-\eps,\eps]}(x)$, for small $\eps$ (see \cite{BW}).

\end{proof}

 Finally, $\Gamma_1$ and $\Gamma_2$ Lyapunov stable sets under the hypothesis of Theorem \ref{LyapEntr}. Thus, we can to obtain $h(\phi)>0$.

\end{example}

\textit{\textbf{Acknowledgements:} The authors are thankful to the anonymous referees for their valuable comments that helped us to improve this manuscript}

\end{document}